\numberwithin{equation}{section}
\newtheorem{remark}{Remark}
\newtheorem{lemma}{Lemma}[section]
\newtheorem{definition}{Definition}[section]
\definecolor{Red}{rgb}{1, 0, 0}
\newcommand{\be}{\begin{equation}}
\newcommand{\ee}{\end{equation}}
\newcommand{\dd}{\partial_{x}}
\newcommand{\ddd}{\partial^2_{xx}}
\newcommand{\epsi}{\varepsilon}
\newcommand{\ba}{\begin{array}}
\newcommand{\ea}{\end{array}}
\newcommand{\pM}{\left(\begin{array}}
\newcommand{\Mp}{\end{array}\right)}
\newcommand{\RR}{ {\bf R}}
\newcommand{\JJ}{{\bf J} }
\newcommand{\eee}{{\bf e}}
\newcommand{\PP}{{\bf P}}
\newcommand{\QQ}{{\bf Q}}
\newcommand{\Real}{\mathbb{R}}
\newcommand{\Mu}{\mathcal{M}}
\begin{document}

\title{Asymptotic Preserving time-discretization of optimal control problems for the Goldstein--Taylor model}
\author[1]{G. Albi \thanks{giacomo.albi@unife.it}}
\author[2]{M. Herty \thanks{herty@igpm.rwth-aachen.de}}
\author[2]{C. J\"orres \thanks{joerres@igpm.rwth-aachen.de}}
\author[1]{L. Pareschi \thanks{lorenzo.pareschi@unife.it}}
\affil[1]{University of Ferrara, Department of Mathematics and Computer Science, Via Machiavelli 35, I-44121 Ferrara, ITALY}
\affil[2]{RWTH Aachen University, Templergraben 55, D-52065 Aachen, GERMANY}

\date{\today}

\maketitle

\begin{abstract}
We consider the development of implicit-explicit time integration schemes for optimal control problems
governed by the Goldstein--Taylor model. In the diffusive scaling this model is a hyperbolic approximation to the
heat equation. We investigate the relation of  time integration schemes and the formal Chapman-Enskog type limiting procedure. For the class of stiffly accurate implicit--explicit Runge--Kutta methods (IMEX) the discrete optimality system also provides a stable numerical  method for optimal control problems governed by the heat equation. Numerical examples illustrate the
expected behavior.
\end{abstract}

\paragraph{Keywords:}
IMEX Runge--Kutta methods, optimal boundary control, hyperbolic conservation laws, asymptotic analysis

\section{Introduction}\label{sec:intro}
We are interested in numerical methods for time discretization of optimal control problems of type (\ref{mh:1}).
The construction of such methods for  control problems involving differential equations
has been an intensive field of research recently \cite{ BonnansLaurent-Varin06,DH01,DH02,Hager00,LangVerwer2011, Walther2007}.
Applications of such methods can be found in several disciplines, form aerospace and mechanical engineering to the life sciences.
In particular, many applications involves systems of differential equations of the form
\begin{equation}\label{mh:1}
y'(t)=f(y(t),t)+\frac{1}{\epsi} g(y(t),t),
\end{equation}
where $f$ and $g$, eventually obtained as suitable
finite-difference or finite-element approximations of spatial
derivatives, induce considerably different time scales indicated by the small
parameter $\epsi>0$ in the previous equation. Therefore,
to avoid fully implicit integrators, it is highly desirable
to have a combination of implicit and explicit (IMEX) discretization
terms to resolve stiff and non--stiff dynamics accordingly. For Runge--Kutta methods such
schemes have been studied in
\cite{ARS97,BPR11,DP11,Hig2006,KC03, PR03,PR}.
\par Control problems with respect to IMEX methods
have been investigated also in \cite{HertySchleper11,bib:HPS} in
the case of fixed positive value of $\epsi>0.$ Among the most
 relevant examples for IMEX scheme are the time discretization of hyperbolic
balance laws and kinetic equations. As discussed in \cite{KC03,PR} the construction of such methods
imply new difficulties due to the appearance of coupled order
conditions and to the possible loss of accuracy close to stiff
regimes $\epsi \ll \Delta t$ and $\Delta t$ being the time discretization
of the numerical scheme. In contrary to the existing work  \cite{HertySchleper11,bib:HPS}
we focus here  on optimal control problems where the
 time integration schemes  also allow a accurate resolution in the stiff regime.
As a prototype example including already the major difficulties for such methods we
choose the  Goldstein-Taylor model (\ref{GT}). This equation already   contains
several ingredients typical to linear kinetic transport models and serves as a prototype and test
case  for  numerical integration schemes.  The model describes the time evolution of
two particle densities $f^+(x,t)$ and $f^-(x,t)$, with $x\in \Omega\subset\Real$ and
$t\in \Real^+$, where $f^+(x,t)$ (respectively $f^-(x,t)$) denotes the
density of particles at time $t
>0$  traveling along a straight line with velocity $+c$
(respectively $-c$). The particle may change with rate $\sigma$ the direction.
 The differential model can be written as
 \be\label{GT0}
 \begin{split}
  {f_t^+} + c {f_x^+} &= \sigma\left( f^- - f^+\right), \\
{f_t^-} - c {f_x^-} &= \sigma\left( f^+ - f^-\right)   .\\
\end{split}
 \ee
Introducing the macroscopic variables
\[
\rho=f^++f^-,\qquad j=c(f^+-f^-)
\]
we obtain the equivalent form
 \be\label{GT}
 \begin{split}
  {\rho}_t + {j}_x &= 0, \\
{j}_t + c^2 {\rho}_x &= -2\sigma j.\\
\end{split}
 \ee
We introduce a linear quadratic optimal control problem subject to a relaxed hyperbolic system of balance laws. Let  $\Omega  = [0,1]$
, terminal time $T>0$, regularisation parameter $\nu \geq 0$ and let $u(t)$ be the  control. The function  $\rho_d(x)$ is a desired state. To simplify notation we set  $c^2=2\sigma=1/\epsi^2$ and  $\epsi>0$ is the non--negative relaxation parameter.

The  optimization problem then reads
\begin{align}\label{eq: objective functional}
\min J(\rho,u) = \dfrac{1}{2}\int_0^1 (\rho(x,T) - \rho_d(x))^2 dx + \dfrac{\nu}{2}\int_0^T u^2(t)dt
\end{align}
subject to
\begin{subequations}\label{eq: goldstein}
\begin{align}
\rho_t+j_x & = 0, \label{eq: goldstein 1}\\
 j_t+\frac{1}{\epsi^2}\rho_x & = -\frac{1}{\epsi^2}j. \label{eq: goldstein 2}\\
 \rho(x,0) &=\rho_0,\qquad  j(x,0) = j_0 \label{eq: goldstein i.c.}\\
j(0,t) &= 0,\qquad\:\: j(1,t) - \rho(1,t) = - u(t)  \label{eq: goldstein b.c.}
\end{align}
\end{subequations}
Further, we set box constraints for the  control
\begin{align*}
u_l(t) \leq u(t) \leq u_r(t)
\end{align*}
In the limit case $\epsi \to 0 $, \eqref{eq: goldstein 2} formally yields $$j(x,t) =-\rho_x(x,t).$$  Plugging this  into \eqref{eq: goldstein 1}
yields the heat equation
\begin{align*}
\rho_t=\rho_{xx}
\end{align*}
and the optimal control problem
 \eqref{eq: objective functional} -- \eqref{eq: goldstein} reduces to a  problem
studied  for example in \cite{bib:TW04}.  Obviously, we expect a similar behavior
for a numerical discretization. This property, called asymptotic preserving, has been investigated
for the simulation of Goldstein--Taylor like models in \cite{BPR11,DP11,GosTo1,GosTo2,Hig2006,JPT98,NaPa1,NaPa2} but has not yet been studied
in the context of control problems.\\
The paper is organized as follows. In Section \ref{sec:2} we introduce the temporal discretization
of  problem  (\ref{eq: goldstein}) and describe in detail the resulting semi--discretized optimal control problems.
We investigate which numerical integration schemes yield a stable approximation to the resulting
optimality conditions.
In the third section we show how to provide a stable discretization scheme in the parabolic regime by introducing a splitting and applying the formal Chapman-Enskog type limiting procedure. In Section \ref{sec:3} we present numerical results on the several implicit explicit Runge--Kutta methods (IMEX) schemes
for the limiting problem  as well as  on an example taken from \cite{bib:TW04}. Definitions for properties
of the IMEX schemes are collected for convenience  in the appendix \ref{app:IMEX}.


\section{ The semi--discretized problem   }\label{sec:2}

We are interested to derive a numerical time integration scheme which allows to treat the optimal control problem
 (\ref{eq: objective functional})--(\ref{eq: goldstein}) for all values of $\epsi \in [0,1]$, including in particular  the limit
 case $\epsi=0.$ Therefore, we leave a side the treatment of the discretization of the spatial variable $x$ as well
 as theoretical aspects of the differentiability of solutions $(\rho,J)$ of equation (\ref{eq: goldstein}). We remark that
 the semigroup generated by a nonlinear hyperbolic conservation/balance law is generically
non-differentiable in $L^1$ even in the scalar one-dimensional (1-D) case.
More details on the differential structure of solutions are found in
\cite{BressanGuerra1997,BressanLewicka1999,BressanMarson1995},
on convergence results for first--order numerical schemes and scalar conservation laws are found in \cite{Bianchini2000,Zuazua2008,Giles2003,GilesPierce2003,GilesSueli2002,Ulbrich2003}
Numerical methods for the optimal control problems of {\em scalar } hyperbolic equations
have been discussed in
\cite{HertyBanda11,Giles1996,GillPierce2001,Ulbrich2002d}. In \cite{GilesUlbrich2010,GilesUlbrich2010a}, the adjoint equation has been discretized using a Lax-Friedrichs-type scheme, obtained by including conditions along shocks and modifying the Lax-Friedrichs numerical viscosity. Convergence of the modified Lax-Friedrichs scheme has been rigorously proved in the case of a smooth convex flux function. Convergence results have also been obtained in \cite{Ulbrich2001} for the class of schemes satisfying the one-sided Lipschitz condition (OSLC) and in \cite{HertyBanda11} for a first--order  implicit-explicit finite-volume method.
To the best of our knowledge there does not exists a convergence theory for spatial discretization of  control problems subject to hyperbolic systems with source terms so far.

In view of the previous discussion the interest is  on the availability of  suitable time--integration  schemes for the arising
optimal control problem. We consider therefore a semi--discretized problem in time. We further skip the spatial dependence
whenever the intention is clear.  The system (\ref{eq: goldstein 1}) consists of a stiff and a non--stiff part
we employ diagonal implicit explicit Runge--Kutta methods (IMEX). Convergence order of such schemes for positive $\epsi$ and the property of symplecticity has been analysed in \cite{bib:HPS}.  In the following we briefly review  IMEX methods and discuss a splitting \cite{BPR11} in order to also resolve efficiently  the stiff limiting  problem ($\epsi=0$).

\par
An $s-$stage IMEX Runge--Kutta method  is characterized by the $s\times s$ matrices
$\tilde{A},A$ and vectors $\tilde{c},c, \tilde{b}, b \in \mathbb{R}^s$, represented by the
 double Butcher tableau:
\begin{center}
\begin{minipage}{0.10\textwidth}
Explicit:
\end{minipage}
\begin{minipage}{0.25\textwidth}
\begin{displaymath}
\begin{tabular}{c|c}
$\tilde{c}$ & $\tilde{A}$\\
\hline
 &  $\tilde{b}^T$
\end{tabular}
\end{displaymath}
\end{minipage}
\begin{minipage}{0.10\textwidth}
Implicit:
\end{minipage}
\begin{minipage}{0.25\textwidth}
\begin{displaymath}
\begin{tabular}{c|c}
$c$ & $A$\\
\hline
  & $b^T$
\end{tabular}
\end{displaymath}
\end{minipage}
\end{center}
We refer to the appendix \ref{app:IMEX} for further definitions and examples of IMEX RK schemes. Applying an
 IMEX time--discretization to  the Goldstein-Taylor model \eqref{eq: goldstein} yields in the limit $\epsi=0$
an explicit numerical scheme for the heat equation \cite{BPR11}. This is only stable provided the parabolic CFL condition $\Delta t \approx \Delta x^2$ holds true.
This is highly undesirable and therefore, a splitting has been introduced such that also in the limit $\epsi=0$ an implicit discretization of the heat equation can be
obtained. We rewrite \eqref{eq: goldstein 1} as
\begin{align}\label{eq: BPR to goldstein 1}
\rho_t&=-\overbrace{(j+\mu \rho_{x})_x}^{explicit}+\overbrace{(\mu \rho_{xx})}^{implicit}
\end{align}
where $\mu=\mu(\epsi) \geq 0$ is such that $\mu(0)=1$ and leave equation \eqref{eq: goldstein 2} unchanged.  Within an IMEX time discretization  we treat explicitly the first term 
and implicitly the second term as indicated in \eqref{eq: BPR to goldstein 1}. It remains to discuss the choice of $\mu$ in equation \ref{eq: BPR to goldstein 1} depending on $\epsi.$  Using formal Chapman--Enkog expansion for this choice, presented in section \ref{app:OptMu}, we observe that in the diffusive limit $\epsi=0$ the term $j+ \mu \rho_{x}$ vanishes.
\par
Combining the previous computations we state the semi--discretized problem for an $s-$stage IMEX scheme.   Introduce a temporal grid of size $\Delta t$ and $N$ equally spaced grid points $t_n$ such that $T =  \Delta t N$ and  $t_1=0.$ Let $\rho^n=\rho(t_n,\cdot), j^n=j(t_n,\cdot)$, $\eee=(1,\dots,1) \in \mathbb{R}^s$ and denote by $\RR=(R_\ell(\cdot) )_{\ell=1}^s$  the $s$ stage variables and similarly for $\JJ$. For notational simplicity we discretize the control on the same temporal grid $u^n=u(t_n)$. However, this is not necessary for the derived results and other approaches can be used. We prescribe boundary conditions in the case $\epsi>0$
as follows: Since in the  limit $\epsi=0$ we obtain  $j(t,x) = - \rho_x(t,x)$  we add $j^n(1)=-\rho^n_x(1)$ and $j^n(0)=-\rho^n_x(0)$ as boundary conditions.  Further let $\mathcal{M} = \operatorname{diag}(\mu_l)\in \mathbb{R}^{s\times s}$ define the values of $\mu_l$ for the levels $l=1,\ldots,s$.
\par
Then,  the semi--discretization of problem \eqref{eq: goldstein} reads
\begin{equation}\label{eq:pb}
\begin{aligned}
\min &   \dfrac{1}{2}\int_0^1 (\rho^N(x) - \rho_d(x))^2 dx + \Delta t \dfrac{\nu}{2} \sum\limits_{n=1}^N \left( u^n \right)^2,  \\
\RR &=\rho^n \eee-\Delta t \tilde{A}(\dd \JJ +\mathcal{M}\ddd \RR)+\Delta t
A\left(\mathcal{M}\ddd\RR \right),\\
\epsi^2\JJ&=\epsi^2 j^n \eee- \Delta t
A(\dd \RR + \JJ ), \\
\rho^{n+1}&=\rho^n -\Delta t \tilde{b}^T( \dd \JJ +\mathcal{M}\ddd \RR)+\Delta t b^T\left(\mathcal{M}\ddd\RR\right),\\
\epsi^2j^{n+1}&=\epsi^2 j^n-\Delta t b^T(\dd \RR + \JJ), \\
\rho^1 &= \rho_0 \quad j^1 = j_0, \\
j^n(0) & = 0,  \quad j^n(1) - \rho^n(1) = - u^n, \\
j^n(0) &= - \rho^n_x(0),\quad  j^n(1)  = - \rho^n(1)_x.
\end{aligned}\end{equation}

Using formal computations we derive the (adjoint) equations \eqref{eq:pba} for the Lagrange multipliers $(p^n,q^n)_{n=1}^N$
and the corresponding stage variables $\PP, \QQ $ with $\PP=(P_\ell(\cdot))_{\ell=1}^s$,  $P_\ell \in \mathbb{R}^s$ and $\QQ$ respectively.

\begin{equation}\label{eq:pba}
\begin{aligned}
 p^n = &\: p^{n+1} + \eee^T \PP,\qquad\qquad \rho^N-\rho_d - p^N = 0,\\
 \epsi^2 q^n = &\: \epsi^2 q^{n+1} + \epsi^2 \eee^T \QQ,\qquad \epsi^2 q^N = 0,  \\
 \PP  =  & \Delta t \left( \dd(A^T \QQ ) + \dd q^{n+1} b
\right)
 - \Delta t \mathcal{M} \left( \ddd (\tilde{A}^T \PP) + \ddd p^{n+1}
\tilde{b} \right)\\
  & + \Delta t \mathcal{M} \left( \ddd (A^T \PP) + \ddd p^{n+1} b \right), \\
  \epsi^2 \QQ = & - \Delta t \left( A^T\QQ + q^{n+1} b \right)+  \Delta t \left( \dd (\tilde{A}^T \PP ) + \dd p^{n+1} \tilde{b}\right).
 \end{aligned}
\end{equation}

We obtain  boundary conditions for \eqref{eq:pba} as
\begin{align}\label{eq:pba bound}
q^n(0) = 0, \quad q^n(1) + p^n(1) = 0, \quad q^n(0) = p^n_x(0) \quad\mbox{ and }Ê\quad q^n(1) = p^n_x(1).
\end{align}
\par
Furthermore, we consider under the assumption of using a type A scheme (we leave on purpose a definitions of  these scheme in appendix \ref{app:IMEX}) the limit case $\epsi=0$ of the optimal control problem \eqref{eq: goldstein}. Note that for $\epsi = 0$ we have $\mathcal{M} = \operatorname{Id}$. The semi--discretized problem is
\begin{equation}\label{eq:pb0}
\begin{aligned}
\min &   \dfrac{1}{2}\int_0^1 (\rho^N(x) - \rho_d(x))^2 dx + \Delta t \dfrac{\nu}{2} \sum\limits_{n=1}^N \left( u^n \right)^2  \\
\RR &=\rho^n \eee-\Delta t  \tilde{A} \left(\partial_x \JJ +
\partial_{xx}^2\RR\right)+\Delta t A \left(\partial^2_{xx} \RR \right)\\
\JJ&= - \partial_x \RR \\
\rho^{n+1}&=\rho^n -\Delta t  \tilde{b}^T \left( \partial_x \JJ + \partial_{xx}^2\RR \right) + \Delta t b^T\left(\partial_{xx}^2\RR\right)\\
\rho^1 &= \rho_0 \quad \rho_x^n(0) = 0,  \quad \rho_x^n(1) + \rho^n(1) = u^n, \\
\end{aligned}\end{equation}

and the corresponding adjoint equations are given by \eqref{eq:pba0}.

\begin{equation}\label{eq:pba0}
\begin{aligned}
 p^n = &\:p^{n+1} + \eee^T \PP, \:\:
 \qquad  \rho^N-\rho_d - p^N = 0,\\
 \PP  =& \:\partial_x \overline{\QQ}
  - \Delta t  \left( \partial_{xx}^2 (\tilde{A}^T \PP) + \partial_{xx}^2 p^{n+1}
\tilde{b} \right)  \\
  & + \Delta t  \left( \partial_{xx}^2 (A^T \PP) + \partial_{xx}^2 p^{n+1} b \right) \\
\overline{\QQ} = &\:  \Delta t \left( \partial_x (\tilde{A}^T \PP ) + \partial_x p^{n+1}
\tilde{b}\right)
 \end{aligned}
\end{equation}
We obtain boundary conditions for \eqref{eq:pba0} as
\begin{align}
p_x^n(1) + p^n(1) = 0, \quad\mbox{ and }Ê\quad p_x^n(0) = 0.
\end{align}
The relation between the limiting problem and the
small $\epsi$ limit of the adjoint equations \eqref{eq:pba} and \eqref{eq:pba0} is summarized in the
following Lemma.

\begin{lemma}\label{lem: cond} If the IMEX Runge Kutta method  is implicit stiffly accurate (ISA) and of type A, then the $\epsi=0$ limit of \eqref{eq:pba} is given by
\begin{equation}\label{eq: adjoint Leps to 0  cond}
\begin{aligned}
 p^n = &\:  \eee^t \PP,\:\: \rho^N-\rho_d - p^N = 0,\qquad\quad   q^n =   0,\:\:q^N = 0,\\
 \PP  =  & \:p^{n+1} \eee_s + \Delta t \partial_x \left(A^T \QQ \right)\\
 & - \Delta t \left( \partial_{xx}^2 \left(\tilde{A}^T \PP\right) -  \partial_{xx}^2 \left(A^T \PP\right) \right) - \Delta t  (\tilde{b}^T - \eee_s^T\tilde{A}) \partial_{xx}^2 p^{n+1}  \eee\\
 0 = &  - \Delta t \left( A^T\QQ   - \partial_x \left(\tilde{A}^T \PP  \right) \right)  +   \Delta t (\tilde{b}^T - \eee_s\tilde{A}) \partial_{x} p^{n+1}  \eee^T
 \end{aligned}
\end{equation}
\par
Further, there exists a linear variable transformation such  that a solution to \eqref{eq: adjoint Leps to 0  cond} is equivalent to a  solution of the adjoint equation \eqref{eq:pba0} of Problem \eqref{eq:pb0} for $\epsi = 0$.\\
\end{lemma}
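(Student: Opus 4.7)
The plan is to (i) pass formally to the limit $\epsi \to 0$ inside the adjoint system \eqref{eq:pba}, and then (ii) introduce a linear change of stage variables that exploits the ISA property to identify the resulting differential-algebraic system with the adjoint system \eqref{eq:pba0} of the limit problem. At $\epsi = 0$ one has $\mathcal{M} = \mathrm{Id}$, the second line of \eqref{eq:pba} becomes vacuous, and the fourth reduces to the purely algebraic constraint
\begin{equation*}
0 = -\Delta t \bigl(A^T\QQ + q^{n+1} b\bigr) + \Delta t \bigl(\dd(\tilde{A}^T\PP) + \dd p^{n+1}\tilde b\bigr).
\end{equation*}

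First I would use the ISA identity $b = A^T \eee_s$ (equivalent to $b^T = \eee_s^T A$) to rewrite $q^{n+1} b = A^T(q^{n+1}\eee_s)$, and then perform the change of variables $\hat\QQ := \QQ + q^{n+1}\eee_s$, $\hat\PP := \PP + p^{n+1}\eee_s$. Since $\eee^T\eee_s = 1$, the telescoped relations $p^n = p^{n+1} + \eee^T\PP$ and $q^n = q^{n+1} + \eee^T\QQ$ collapse to $p^n = \eee^T\hat\PP$ and $q^n = \eee^T\hat\QQ$. In the $\PP$-equation the term $\dd q^{n+1} b$ combines with $\dd(A^T\QQ)$ into $\dd(A^T\hat\QQ)$, and the diffusive cross-terms $\ddd p^{n+1}(b - A^T\eee_s)$ cancel, leaving only a residual $-\Delta t(\tilde b^T - \eee_s^T\tilde A)\ddd p^{n+1}\eee$ that arises from pulling $p^{n+1}\eee_s$ through $\tilde A^T$. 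After renaming $\hat\PP \mapsto \PP$, $\hat\QQ \mapsto \QQ$, this is precisely \eqref{eq: adjoint Leps to 0  cond}. Type A guarantees that $A$ is invertible, so the algebraic $\QQ$-equation uniquely determines $\QQ$ from $\PP$ and $p^{n+1}$, and together with the terminal datum $q^N = 0$ this forces $q^n = 0$ for every $n$.

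For the equivalence with \eqref{eq:pba0} I would exhibit the explicit linear map $\overline\QQ := \Delta t\,A^T\QQ$, up to the isolated boundary correction proportional to $(\tilde b^T - \eee_s^T\tilde A)\dd p^{n+1}$. Inserting this into \eqref{eq: adjoint Leps to 0  cond} converts the algebraic $\QQ$-equation into the defining relation for $\overline\QQ$ in \eqref{eq:pba0}, and turns $\Delta t\,\dd(A^T\QQ)$ in the $\PP$-equation into $\dd\overline\QQ$; the boundary conditions $q^n(0) = 0$ and $q^n(1) + p^n(1) = 0$ reduce consistently to $p_x^n(0) = 0$ and $p_x^n(1) + p^n(1) = 0$. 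The main obstacle I anticipate is the careful bookkeeping of the ISA cancellations: verifying that the residual coefficient $\tilde b^T - \eee_s^T\tilde A$ appears with exactly the sign and position that allows it to be absorbed by the change of variables, rather than surviving as an anomalous source term that would break the identification. Invertibility of $A$ (Type A) and the explicit identity $b = A^T\eee_s$ (ISA) are both essential for this to close.
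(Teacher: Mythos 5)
Your overall route---pass formally to the limit $\epsi=0$ and then identify the result with \eqref{eq:pba0} via the linear map $\overline{\QQ}=\Delta t\,A^T\QQ$---is the same as the paper's, but you apply the ISA structure on the adjoint side (shifting the stage multipliers by $p^{n+1}\eee_s$ and $q^{n+1}\eee_s$), whereas the paper applies it on the forward side: it first rewrites the state scheme in the ISA form \eqref{eq: imstac}, with $\rho^{n+1}=\eee_s^T\RR-\Delta t(\tilde b^T-\eee_s^T\tilde A)(\dd\JJ+\mathcal{M}\ddd\RR)$ and $j^{n+1}=\eee_s^T\JJ$, and only then forms the adjoint. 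For the $\PP$--equation and the algebraic $\QQ$--equation these two operations are dual to one another, and your bookkeeping (using $b=A^T\eee_s$ to merge $\dd q^{n+1}b$ into $\dd(A^T\hat{\QQ})$, to cancel the implicit diffusive cross term, and to leave the residual proportional to $\tilde b^T-\eee_s^T\tilde A$) does reproduce \eqref{eq: adjoint Leps to 0  cond}. One caveat: that residual is not ``absorbed'' by the transformation; it survives in \eqref{eq: adjoint Leps 0  cond} and is matched against the identical term produced by writing the limit scheme \eqref{eq:pb0} in its ISA form, and it vanishes identically only for globally stiffly accurate schemes.

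The genuine gap is your derivation of $q^n\equiv 0$. In the direct limit of \eqref{eq:pba} the recursion $\epsi^2q^n=\epsi^2q^{n+1}+\epsi^2\eee^T\QQ$ degenerates to $0=0$, so it no longer propagates anything backwards from the terminal datum $q^N=0$; and the algebraic equation determines $\QQ$ (by Type A), not $q^n$. The sentence ``the algebraic $\QQ$-equation uniquely determines $\QQ$ \dots and together with $q^N=0$ this forces $q^n=0$'' is therefore a non sequitur. Worse, if one divides the recursion by $\epsi^2$ before letting $\epsi\to0$, one obtains $q^n=q^{n+1}+\eee^T\QQ=\eee^T\hat{\QQ}$, which is generically nonzero for the $\hat{\QQ}$ your shift produces. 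The paper avoids this ambiguity precisely through the forward-side rewriting: once $j^{n+1}=\eee_s^T\JJ$, the variable $j^n$ enters the discrete Lagrangian only through the $\epsi^2$--weighted stage equation, so the multiplier relation comes out as $q^n=\epsi^2\eee^T\QQ$ and gives $q^n=0$ at $\epsi=0$ without any limiting argument. You need this step (or an equivalent rescaling of the stage multiplier $\QQ$) to close the proof.
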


\begin{proof}
In the case of implicit stiffly accurateness the IMEX scheme simplifies to 
\begin{equation}\label{eq: imstac}
\begin{aligned}
\RR &=\rho^n \eee-\Delta t \tilde{A}(\dd \JJ +\mathcal{M}\ddd \RR)+\Delta t
A\left(\mathcal{M}\ddd\RR \right)\\
\epsi^2\JJ&=\epsi^2 j^n \eee- \Delta t
A(\dd \RR + \JJ) \\
\rho^{n+1}&=\eee_s^T\RR - \Delta t (\tilde{b}^T - \eee_s^T\tilde{A}) (\partial_x \JJ + \mathcal{M} \partial_{xx}^2 \RR),
\quad j^{n+1} = \:\eee_s^T \JJ
\end{aligned}
\end{equation}
and the corresponding adjoint equations are given by
\begin{equation*}\label{eq: adjoint Leps cond}
\begin{aligned}
 p^n = &\:  \eee^t \PP,\:\:  q^n =   \epsi^2 \eee^T \QQ   ,\qquad\quad   \rho^N-\rho_d - p^N = 0 ,\:\: \epsi^2 q^N = 0,\\
 \PP  =  & \:p^{n+1} \eee_s + \Delta t \partial_x \left(A^T \QQ \right)\\
 & - \Delta t \mathcal{M} \left( \partial_{xx}^2 \left(\tilde{A}^T \PP\right) -  \partial_{xx}^2 \left(A^T \PP\right) \right)  - \Delta t (\tilde{b}^T - \eee_s^T\tilde{A}) \partial_{xx}^2 p^{n+1} \mathcal{M} \eee\\
  \epsi^2 \QQ = & \: q^{n+1} \eee_s  - \Delta t \left( A^T\QQ   - \partial_x \left(\tilde{A}^T \PP  \right) \right)  +  \Delta t (\tilde{b}^T - \eee_s^T\tilde{A}) \partial_{x} p^{n+1}  \eee
 \end{aligned}
\end{equation*}
Since $\mathcal{M} = \operatorname{Id}$ in the limit $\epsi  =  0$ we obtain the adjoint equations \eqref{eq: adjoint Leps to 0  cond}.
Introducing the transformation
\begin{align*}
\overline{\QQ} =  \Delta t A^T \QQ.
\end{align*}
and proceeding yields from  \eqref{eq: adjoint Leps to 0 cond} the system \eqref{eq: adjoint Leps 0 cond}.
\begin{equation}\label{eq: adjoint Leps 0  cond}
\begin{aligned}
 p^n = &\:  \eee^T \PP,\:\: q^n =   0,  \qquad\quad   \rho^N-\rho_d - p^N = 0, \:\:q^N = 0,\\
 \PP  =  & \:p^{n+1} \eee_s +  \partial_x \overline{\QQ}\\
 & - \Delta t  \left( \partial_{xx}^2 \left(\tilde{A}^T \PP\right) -  \partial_{xx}^2 \left(A^T \PP\right) \right) - \Delta t  (\tilde{b}^T - \eee_s^T \tilde{A}) \partial_{xx}^2 p^{n+1} \eee\\
 \overline{\QQ} = &\: \Delta t   \partial_x \left(\tilde{A}^T \PP  \right)    +  \Delta t (\tilde{b}^T - \eee_s^T\tilde{A}) \partial_{x} p^{n+1}  \eee
 \end{aligned}
\end{equation}
The latter are the adjoint equations \eqref{eq:pba0} to problem \eqref{eq:pb0} provided an implicit stiffly accurate scheme \eqref{eq: imstac} has been used.
Therein $\rho^{n+1}=\rho^n -\Delta t  \tilde{b}^T \left( \partial_x \JJ + \partial_{xx}^2\RR \right) + \Delta t b^T\left(\partial_{xx}^2\RR\right)$ becomes  $\rho^{n+1}= \eee_s\RR - \Delta t \left( \tilde{b}^T  - \eee_s\tilde{A}\right) \left( \partial_x \JJ + \partial_{xx}^2\RR \right)$, which yields further simplifications in \eqref{eq: adjoint Leps to 0  cond} and \eqref{eq: adjoint Leps 0  cond}, respectively.
\end{proof}

\par
A particular, yet important case of Lemma \ref{lem: cond} are the so--called globally stiffly accurate IMEX scheme. They fulfil additionally  $(\tilde{b}^T - \eee_s^T \tilde{A}) = 0$. \\

\section{Optimal choice of $\mathcal{M}$}\label{sec:OptMu}

In the following section we discuss the optimal choice of $\mu$ in equation \eqref{eq: BPR to goldstein 1}.  We want to avoid parabolic stiffness for small value of $\epsi$,  and the numerical instabilities due to the discretization of the term $(j+\mu\rho_x)_x$.
In \cite{BPR11} the following formula has been used $\mu = \exp( - \epsi / \Delta x ),$ here we want to choose $\mu$ in such a way that Chapman-Enskog expansion with respect to $\epsi$ at least to order $O(\epsi^2)$ and the term $j + \mu \rho_x$ vanishes.
It can been shown that independent of $\mu$  a stiffly accurate asymptotic-preserving IMEX yields an asymptotic--preserving scheme for the limit equation.

Considering an $s-$stage IMEX scheme and a semi--discretization of \eqref{eq: goldstein} as in \eqref{eq:pb}, the optimal choice of an  diagonal matrix $\Mu$, such that the explicit term $\JJ + \Mu \partial_{x} \RR $ vanishes  in the $O(\epsi^2)$ regime is presented in the following lemma.

\begin{lemma}\label{lemm:OptMuTypeA}
If the IMEX scheme is of type A an optimal choice for $\Mu$ in the $O(\epsi^2)$ regime for scheme
\begin{align}
  \RR &=\rho^n \eee-\Delta t \tilde{A}  \partial_{x}\left( \JJ + \Mu \partial_{x} \RR \right) +\Delta t
A \Mu\partial^2_{xx} \RR\nonumber\\
\label{eq:semiPb}
\epsi^2\JJ&=\epsi^2 j^n \eee- \Delta t
A( \partial_{x}\RR + \JJ)\nonumber \\
\rho^{n+1}Ê&= \rho^n - \Delta t  \tilde{b}^T \partial_{x}\left( \JJ + \Mu \partial_{x} \RR \right) + \Delta t b^T  \Mu \partial^2_{xx} \RR\\
\epsi^2 j^{n+1}&=\epsi^2 j^n - \Delta t b^T (\partial_{x}\RR + \JJ )\nonumber
\end{align}
is given by
\begin{align}\label{eq: optmu}
 \Mu &= \Delta t\left( \epsi^2 Id + \Delta t~\operatorname{diag}(A) \right)^{-1} ~\operatorname{diag}(A).
\end{align}
\end{lemma}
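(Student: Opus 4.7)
The plan is to solve the stage equation for $\JJ$ in (\ref{eq:semiPb}) stage-by-stage, then inspect the coefficient of $\dd \RR$ in the explicit flux $\JJ + \Mu\,\dd \RR$ and pick the diagonal entries of $\Mu$ so as to annihilate it. Concretely, I would start from the componentwise form
\[
(\epsi^2 + \Delta t\,a_{ii})\,J_i \;=\; \epsi^2 j^n - \Delta t\,a_{ii}\,\dd R_i - \Delta t \sum_{k<i} a_{ik}\bigl(\dd R_k + J_k\bigr),
\]
which is available because for a type A scheme the implicit Butcher matrix $A$ is invertible with nonzero diagonal (so the stages decouple in a DIRK fashion). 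Dividing by $\epsi^2+\Delta t\,a_{ii}$ and adding $\mu_i\,\dd R_i$, the coefficient of $\dd R_i$ in $J_i+\mu_i\,\dd R_i$ equals $\mu_i-\Delta t\,a_{ii}/(\epsi^2+\Delta t\,a_{ii})$; setting this to zero yields exactly the $i$-th diagonal entry of the announced formula (\ref{eq: optmu}).

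With this choice one is left with
\[
J_i + \mu_i\,\dd R_i \;=\; \frac{\epsi^2 j^n - \Delta t\sum_{k<i}a_{ik}(\dd R_k + J_k)}{\epsi^2+\Delta t\,a_{ii}},
\]
and the second step of the plan is to verify inductively on $i$ that the right-hand side is indeed $O(\epsi^2)$, which is what makes the explicit term vanish at that order. The base case $i=1$ is trivial since the sum is empty. For the inductive step I would use the decomposition $\dd R_k + J_k = (1-\mu_k)\,\dd R_k + (J_k+\mu_k\,\dd R_k)$ together with $1-\mu_k = \epsi^2/(\epsi^2+\Delta t\,a_{kk}) = O(\epsi^2)$ and the induction hypothesis applied to the second summand; both contributions are then $O(\epsi^2)$, and so is the whole sum.

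The subtle point, and the one I expect to be the main obstacle to stating cleanly, is interpretive rather than computational: because $\Mu$ is constrained to be diagonal, one cannot use the full matrix inverse $\Delta t(\epsi^2\operatorname{Id}+\Delta t A)^{-1}A$ to cancel $\dd \RR$ exactly at every stage. The best a diagonal $\Mu$ can achieve is to remove the self-coupling of $J_i$ with $\dd R_i$ at each stage, while the contributions of previous stages (through $\dd R_k + J_k$) are absorbed into the $O(\epsi^2)$ remainder by the induction above. This is precisely what the replacement $A \mapsto \operatorname{diag}(A)$ in (\ref{eq: optmu}) encodes, and justifies the claim that the formula is the optimal one within the diagonal class.
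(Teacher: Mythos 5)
Your argument is correct and is essentially the paper's own: the stage-by-stage induction you give is exactly the informal derivation in the main text, and it is the componentwise form of the appendix proof, which splits $A=\operatorname{diag}(A)+L$, inverts the diagonal part to define $\Mu$, and absorbs the strictly lower-triangular coupling into the $O(\epsi^2)$ remainder via nilpotency (your induction over the stage index). Your closing observation that a diagonal $\Mu$ can only cancel the self-coupling, with the cross-stage terms relegated to $O(\epsi^2)$, matches the paper's use of $(\operatorname{Id}-\Mu)_i=\epsi^2/(\epsi^2+\Delta t\,a_{ii})$.
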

The formula follows straightforward substituting stage by stage the approximation of order $O(\epsi^2)$ in the subsequent stages
\begin{align*}
  J_1 = & - \frac{a_{11} \Delta t}{ \epsi^2 +a_{11} \Delta t}\partial_{x}R_1+ O(\epsi^2),\\
  J_2 = & - \frac{a_{22} \Delta t}{ \epsi^2 +a_{22} \Delta t}\partial_{x}R_2-\frac{a_{21}\Delta t}{ \epsi^2+a_{22}\Delta t}\underbrace{(\partial_{x}R_1 + J_1)}_{O(\epsi^2)}+O(\epsi^2)= - \frac{a_{22} \Delta t}{ \epsi^2 +a_{22} \Delta t}\partial_{x}R_2+O(\epsi^2)\\
  \vdots~&\\
 J_i =
 &-\frac{a_{ii} \Delta t}{ \epsi^2+a_{ii}\Delta t}\partial_{x}R_i - \sum_{j=1}^{i-1}\frac{a_{ij}\Delta t}{ \epsi^2+a_{ii}\Delta t}\underbrace{(\partial_{x}R_j + J_j)}_{O(\epsi^2)} +O(\epsi^2)
     =  -\frac{a_{ii} \Delta t}{ \epsi^2 +a_{ii} \Delta t}\partial_{x}R_i+O(\epsi^2).
  \end{align*}
We leave a rigorous proof in Appendix \ref{app:OptMu}.
\begin{remark}
Note that $\Mu=\operatorname{diag}(\mu^n_j)$ is not depending on $t_n$, i.e, $\mu^n_j\equiv\mu_j$, and the solution of \eqref{eq: optmu} can be computed for the stages once and for all.
Moreover \eqref{eq: optmu} tells us that when  $\epsi \to 0$,  $\Mu$ has the expected behavior, namely $\Mu \to \operatorname{Id}$.
\end{remark}

\section{Numerical results} \label{sec:3}


For the temporal discretization we use different IMEX schemes fulfilling the properties of Lemma \ref{lem: cond}. We consider second--order in time schemes.
 The IMEX GSA(3,4,2),  \cite{bib:HPS}, as given by the Butcher tables in table \ref{tab: butcher gsa}  is a globally stiffly accurate scheme which
  is of type A. The implicit part is invertible and the last row
  of implicit and explicit scheme coincide. It is of second--order as the numerical results show.
  Further, we consider the second--order IMEX  SSP(3,3,2) scheme, \cite{BPR11},  (table \ref{tab: butcher ssp2})  which is only implicitly stiffly accurate and
  of type A.
  In view of Theorem 3.1\cite{bib:HPS} we observe that SSP(3,3,2) is symplectic.
  Theorem 2.1\cite{bib:HPS} guarantees that for all considered schemes the convergence order of the
  IMEX scheme applied to the optimality system is also of second--order.
  \par For the spatial discretization  we introduce
an equidistant grid with $M$ grid points $\{x_i\}_{i=1}^M$ and grid size $\Delta x$,
such that $x_1 = \frac{\Delta x}{2}$ and $x_M = 1-\frac{\Delta x}{2}$.
We set $\rho^n(x_i) = \rho_i^n$ and $j^n(x_i) = j^n_i$.

Since the Goldstein-Taylor model depends on $\epsi$,
we expect parabolic behavior  for $\epsi\ll1$ and  hyperbolic behavior else.
We use second order central difference for the diffusive part  $\rho_{xx}$
and hyperbolic discretization based on an Upwind  scheme for the advective
terms.
In order to determine the Upwind direction, we recall from section \ref{sec:intro}  the definition of the macroscopic variables
\begin{align}\label{eq: definition uv}
\rho=f^++f^-,\qquad j=\frac{1}{\epsi}(f^+-f^-).
\end{align}

We obtain for $f^+$, the density of particles with positive velocity, the Upwind scheme,
\begin{align*}
 \frac{f^+_{i}-f^+_{i-1}}{\Delta x} = \frac{f^+_{i+1}-f^+_{i-1}}{2\Delta x} -
\frac{\Delta x}{2} \frac{f^+_{i+1} - 2f^+_i - f^+_{i-1}}{(\Delta x)^2}.
\end{align*}
Similar for the scheme of $f^-$. By combining the discretization for $f^+$ and $f^-$ we obtain the discrete stencils  in the original variables by applying \eqref{eq: definition uv}, as follows:
\begin{align}\label{eq: D^h D^p}
 D^h\rho = D^c\rho - \frac{\epsi \Delta x}{2} D^2 j,\qquad
  D^hj = D^cj - \frac{\Delta x}{2\epsi} D^2 \rho
\end{align}
where $D^c$ is the stencil for central difference
$\frac{1}{\Delta x}\:(-1\:\:\:\: 0\:\:\:\:1)$
and $D^2$ the second order central difference
$\frac{1}{(\Delta x)^2} (1\:\:\:-2\:\:\:\:1)$.
Using a convex combination of the  discretization of the diffusive term  with the hyperbolic part by the function $\Phi =
\Phi(\epsi)$ we finally obtain
\begin{align}\label{eq: convex PHI}
 D\rho = \Phi D^c \rho + (1-\Phi) D^h\rho,\qquad
 Dj = \Phi D^c j + (1-\Phi) D^hj
\end{align}
The function $\Phi$ is
chosen such that $\Phi(0) = 1$ and $\Phi(\epsi)\frac{\Delta x}{2\epsi}
\rightarrow 0$ for $\epsi \rightarrow 1$. The simplest possible
way is $\Phi = 1- \epsi$, but other choices have been proposed in~\cite{BLR13}, where the value of $\Phi$ coincides with $\mu=\exp(-\epsi/\Delta x)$ or in~\cite{JL96} with $\Phi=1-\tanh(\epsi/\Delta x)$. We refer also to \cite{JPT98,NaPa2} for other possible approaches. 

 In all cases we discretize the with a spatial grid size  $\Delta x \approx \Delta t$ since
we avoid the parabolic CFL condition due to introduced splitting, \eqref{eq: BPR to goldstein 1}. The discretization
of $\Delta x \approx \Delta t$ is the typical hyperbolic CFL type condition induced
by the transport.


\subsection{Order analysis}
To verify the theoretical results numerically we set up the following test problem.  We consider the parabolic case. Let $\epsi =0,
\nu = 0, u_l = -1$ and $ u_r = 1$. Further set $\rho_0= cos(x)$, $j_0 = 0$ and $\rho_d(x) = e^{-T} \cos(x)$.
Then, the solution to the optimal control problem \eqref{eq: objective functional} -- \eqref{eq: goldstein}  is given analytically  by  $u^\ast(t) = e^{-t}\left(\cos(1) - \sin(1)\right)$ and  $J=0$ .  Within this setting $\rho(x,t) = e^{-t}\cos(x)$ is solution of \eqref{eq: goldstein} and  $p^\ast(t, 1) = 0$.  The domain is $\Omega=[0,1]$ and the terminal time $T=1.$

We  compute the  numerical solution for different values of  $N \in \{ 20, 40, 80, 160, 320\}$ using different IMEX schemes.  We denote by
$\rho^\ast_N$ and $p^\ast_N$  the solution to \eqref{eq:pba} with initial values $\rho_d = \rho(x,T)$ and $\rho^N = \rho_N^\ast$.
We compare  ratios of  $L^\infty$ and $L^1$ errors of the approximate solutions using the following norms:
\begin{align*}
L^\infty(L^1(\Omega)) :=  L^\infty(0,T; L^1(\Omega)) \qquad and \qquad L^\infty(L^\infty(\Omega)) := L^\infty(0,T; L^\infty(\Omega)).
\end{align*}
 The results for different IMEX schemes are listed in table \ref{tab: order gsa(2,3,4)} and \ref{tab: order SSP(3,3,2)}.

\begin{table}
\begin{center}
\begin{tabular}{c||c|c||c|c}
$N$ &  $\| \rho^\ast_N - \rho_d \|_{ L^\infty(L^1(\Omega))}$ &$\| \rho^\ast_N - \rho_d \|_{ L^\infty(L^\infty(\Omega))}$    &  $\|p^\ast_N  \|_{ L^\infty(L^1(\Omega))}$ & $\|p^\ast_N \|_{ L^\infty(L^\infty(\Omega))}$  \\ \hline\hline
20    &1.31e-04 $\qquad\:\:\:$ &  2.69e-07$\qquad\:\:\:\:$ & 1.25e-04$\qquad\:\:\:\:$& 2.02e-07$\qquad\:\:\:\:$      \\
40    &3.10e-05 (2.08)           &  6.08e-08 (2.14)         & 3.03e-05 (2.04) 			& 4.88e-08 (2.04)\\
80    &  7.55e-06 (2.04)         &  1.43e-08 (2.09)          &7.46e-06 (2.02) 			& 1.21e-08 (2.01)\\
160  &  1.86e-06 (2.01)         &  3.45e-09 (2.05)         & 1.85e-06 (2.01) 			& 3.01e-09 (2.00)\\
320  &   4.62e-07 (2.00)        &  8.41e-10 (2.03)         & 4.61e-07 (2.00) 			& 7.55e-10 (1.99)\\
\hline\hline
\end{tabular}
\caption{Order results for the GSA(3,4,2), table \ref{tab: butcher gsa}, $\epsi = 0$. In brackets the the $\log_2$-ratio between the results from two subsequent step width.}
\label{tab: order gsa(2,3,4)}
\end{center}
\end{table}

\begin{table}[h!]
\begin{center}
\begin{tabular}{c||c|c||c|c}
$N$ &  $\| \rho^\ast_N - \rho_d \|_{ L^\infty(L^1(\Omega))}$ &$\| \rho^\ast_N - \rho_d \|_{ L^\infty(L^\infty(\Omega))}$    &  $\|p^\ast_N  \|_{ L^\infty(L^1(\Omega))}$ & $\|p^\ast_N \|_{ L^\infty(L^\infty(\Omega))}$  \\ \hline\hline
20   &1.29e-04	$\qquad\:\:\:\:$&2.73e-07$\qquad\:\:\:\:$& 1.22e-04$\qquad\:\:\:\:$&   1.96e-07$\qquad\:\:\:\:$	         \\
40   & 3.07e-05	 (2.06) &   6.15e-08	 (2.15) & 2.99e-05	 (2.03) & 4.80e-08	 (2.02)\\
80   &  7.51e-06	 (2.03)&  1.44e-08	 (2.09)  & 7.42e-06	 (2.02) & 1.19e-08	 (2.00)\\
160 &  1.85e-06	 (2.01)&   3.46e-09	 (2.05) & 1.85e-06	 (2.00) & 3.00e-09	 (1.99)\\
320 &  4.62e-07	 (2.00)&   8.43e-10	 (2.03) & 4.61e-07	 (2.00) & 7.52e-10	 (1.99)\\ \hline\hline
\end{tabular}
\caption{Order results for the SSP2(3,3,2), table \ref{tab: butcher ssp2}, $\epsi = 0$. In brackets the $\log_2$-ratio between the results from two subsequent step width corresponds.}
\label{tab: order SSP(3,3,2)}
\end{center}
\end{table}
As expected we observe the  convergence order of two for all discussed schemes.  We tested the example for 
 stiffly accurate (SSP2(3,3,2))  as well as globally stiffly accurate schemes (GSA(3,4,2)).  The order two is in particular preserved in the limit $\epsi=0$
as expected by the previous Lemmas.

\subsection{Computational results on the optimal control problem}
We compare the IMEX methods applied to the Goldstein--Taylor model in the limit case $\epsi = 0$ with the numerical solution presented in \cite{ bib:TW04}.
Therein,  the limit problem has been studied using parameters:
  $T = 1.58, \rho_o = j_0 = 0, \rho_d(x) = 0.5 (1-x^2), \nu = 0.001, u_l = -1$ and $u_r = 1$. We furthermore set $N=100$ and $M = 50$.   We use a gradient based optimization to iteratively compute the  optimal control $u^\ast$ using an implicit stiffly accurate scheme (ISA). The numerical approximation to the gradient for the reduced objective functional $\tilde{J}(u)$ is then given by
 \begin{align*}
  \nabla \tilde{J}=  \Delta t (\nu u^n + p^{n}) 
 \end{align*}
where $p^n$ is the solution to the adjoint equation \eqref{eq:pba}, respectively \eqref{eq: adjoint Leps to 0  cond}, at time  $t^n$. 
The terminal condition for the gradient based optimization is $\| proj_{[u_l,u_r]}(\nabla \tilde{J})\|_{L^2(0,T)} \leq  10^{-6}$.

\begin{table}[h!]
\centering
\begin{tabular}{c||c|c|c|c|c}
IMEX & $\epsi = 0$ & $\epsi = 0.1$ & $\epsi = 0.5$  & $\epsi = 0.8$& $\epsi = 1$\\\hline\hline
GSA(3,4,2) &  $6.51\cdot 10^{-4}$ & $5.94\cdot 10^{-4}$ & $2.85\cdot 10^{-4}$ &$2.47\cdot 10^{-4}$& $2.44\cdot 10^{-4}$\\\hline
 SSP(3,3,2) & $6.52\cdot 10^{-4}$&-& $2.84\cdot 10^{-4}$ & $2.46\cdot 10^{-4}$ &  $2.43\cdot 10^{-4}$ \\\hline \hline
 \end{tabular}
 \caption{Results for $J(u_\epsi^\ast)$, different IMEX schemes and values of $\epsi$. For $\epsi = 0$ in \cite{ bib:TW04}, they obtain $J(u) = 6.86\cdot 10^{-4}$.  }\label{tab: valueJ}
 \end{table}

 The final values for $J(u_\epsi^\ast)$ for the different schemes are presented in table \ref{tab: valueJ}.
The calculated values  with our method $J(u_0^\ast)$  are consistent with respect to the numerical discretization in space and time to the ones in \cite{bib:TW04}. Note that in the limit $\epsi=0$ we do not have
a parabolic CFL condition due to the applied splitting and the obtained results are precisely as in \cite{bib:TW04}.

Figure \ref{fig: validation} shows the numerical solutions using  GSA(3,4,2) scheme for different values of $\epsi \in \{0, 0.1, 0.5, 1\}$.
 The globally stiffly accurate IMEX schemes yield
 solutions to the $\epsi-$dependent class of optimization problems (\ref{eq: goldstein})
 across the full range of parameters $\epsi.$
\begin{figure}[h!]
\centering
\includegraphics[width=\textwidth]{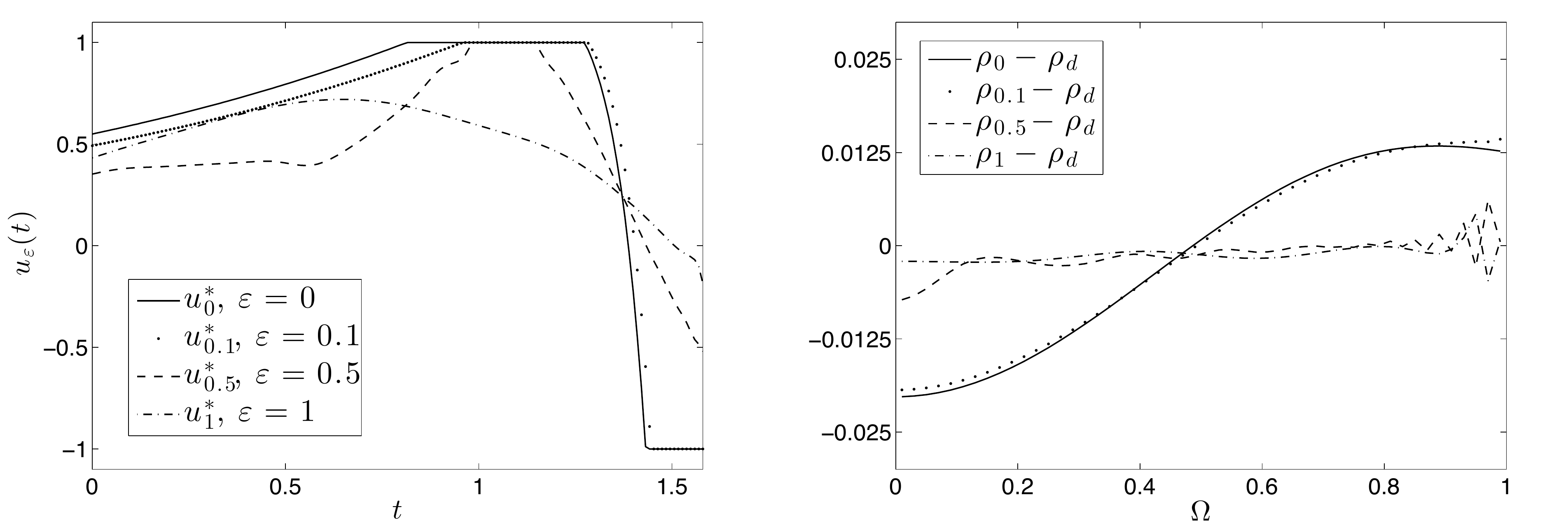}
\caption{Numerical solution, using GSA(3,4,2) scheme, see appendix \ref{app:IMEX}, for 150 time steps and 50 grid points in space. The left part of the plot shows the  optimal controls $u_\epsi^\ast$ for different values of $\epsi$. On the right plot we show the corresponding optimal states $\rho_\epsi^\ast(\cdot,T)- \rho_d$ for different choices of $\epsi$.}
\label{fig: validation}
\end{figure}

In figure \ref{fig: validation1} we plot the numerical solutions using SSP(3,3,2) scheme for different values of $\epsi \in \{0, 0.5, 0.8, 1\}$. As in \cite{BPR11} shown, SSP(3,3,2) is not globally stiffly accurate, and therefore we cannot expect stability for small values of $\epsi$, even if $\epsi = 0$ provides a stable solution. Further, we set $N=200$ for similar reasons.
\begin{figure}[h!]
\centering
\includegraphics[width=\textwidth]{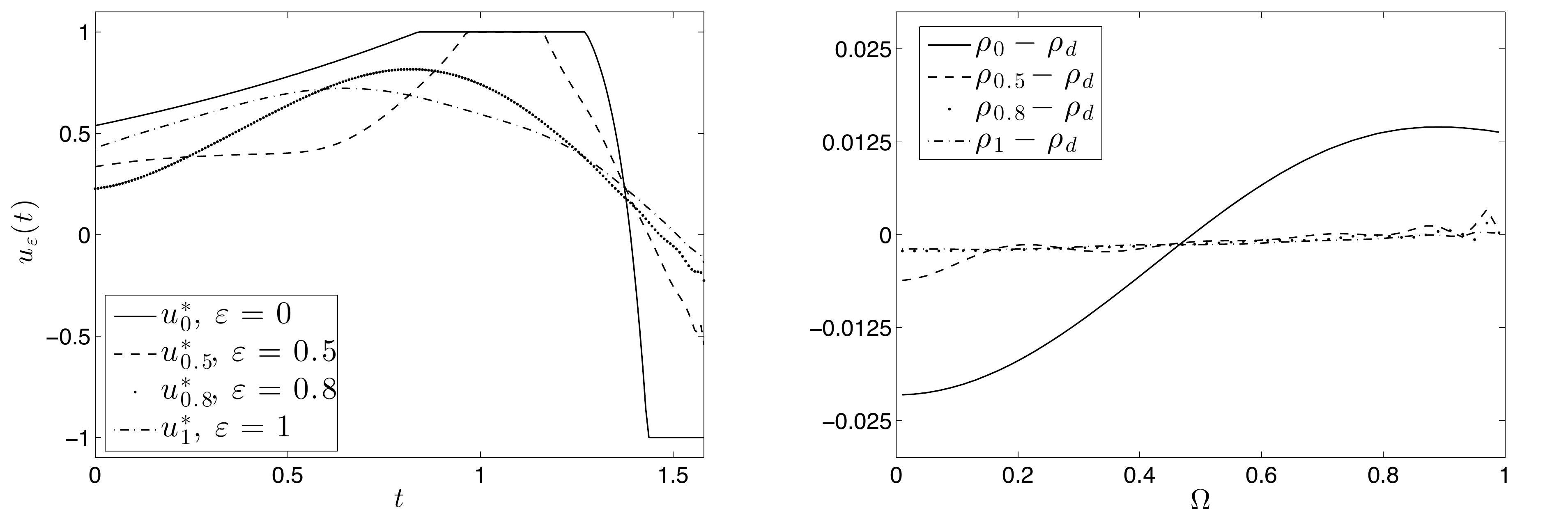}
\caption{Numerical solution, using SSP2(3,3,2), i.e. table \ref{tab: butcher ssp2}, for 200 time steps and 50 grid points in space. On the left the optimal control $u^\ast$ is plotted. The right part shows the difference of the optimal state to the desired state, i.e. $\rho_\epsi^\ast(\cdot,T) - \rho_d$.  } 
\label{fig: validation1}
\end{figure}

In both figures, one can observe oscillations at the boundary $x =1$ for values of $\epsi > 0.25$. This is caused due to the assumption $j = -\rho_x$ in $x=1$, which holds true just for $\epsi = 0$.
We set $\Phi = 0.3$ for GSA(3,4,2) and $\epsi = 1$. Further for SSP(3,3,2) and $\epsi = 0.5$ we set $\Phi = 0.385$. All other values of $\epsi$ are treated with $\Phi = 1-\epsi^3$.

\appendix

\section{Definitions of implicit--explicit Runge--Kutta methods }\label{app:IMEX}

We consider the Cauchy problem for a system of ODEs such that
\begin{align}\label{eq:ODEs}
y'=f(y)+g(y),\qquad y(0)=y_0,\qquad t\in[0,T],
\end{align}
where $y(t)\in \mathbb{R}$ and $f,g: \mathbb{R}\longrightarrow\mathbb{R}$ Lipschitz continuous functions.
Using an Implicit-Explict Runge--Kutta  method with time step $\Delta t$ we obtain the following numerical scheme for \eqref{eq:ODEs}
\begin{align*}
&{\bf Y}= y^n\textbf{e}+\Delta t\left( \tilde{A}{\bf  F}({\bf Y}) + A {\bf G}({\bf Y})\right)\\
&y^{n+1} = y^{n}+\Delta t\left( \tilde{b}^T {\bf F}({\bf Y})+b^T{\bf G}({\bf Y})\right),
\end{align*}
where ${\bf Y}=(Y_l(\cdot))_{l=1}^s$ denotes the $s$ stage variables,  and ${\bf F}({\bf Y}^n)=(f(Y_l))_{l=1}^s$, ${\bf G}({\bf Y})=(g(Y_l))_{l=1}^s$, moreover ${\bf e }=(1,\ldots,1)\in\mathbb{R}^s$. The matrices $\tilde{A}, A$ are $s\times s$ matrices,  and $\tilde{c},c, \tilde{b}, b \in \mathbb{R}^s$. We take in account IMEX  schemes satisfying the following definition
\begin{definition}
A diagonally implicit IMEX Runge Kutta (DIRK) method is such that matrices $\tilde{A},$ and $A$ are lower triangular, where $\tilde{A}$ has zero diagonal.
\end{definition}
Further we consider the following basic assumptions on $\tilde{c},c, \tilde{b}, b \in \mathbb{R}^s$
\begin{align*}
\sum_{i=1}^{s} b_i=1,\qquad \sum_{i=1}^{s} \tilde{b}_i=1, \qquad  \tilde{c}_i = \sum_{j=1}^{i-1}\tilde{a}_{ij}, \qquad c_i = \sum_{j=1}^{i}a_{ij},
\end{align*}
Those conditions need to be fulfilled for  a first order Runge Kutta method. Increasing the order of a Runge Kutta  method  increases the number of restrictions on the coefficients in the Butcher tables. For IMEX methods  up to order $k=3$, the number of constraints can be reduced  if
$c=\tilde{c}$ and $b=\tilde{b}$ \cite{SA91,SS88}.
\begin{definition} [Type A \cite{BPR11,PR03}]\label{def:A}
 If $A$ is invertible the IMEX scheme is  of {\bf type A}.
\end{definition}
\begin{table}[h]
\begin{displaymath}
\begin{array}{llllllll}
\begin{tabular}{c|cccc}
$0$    & $0$      &$0$     &$0$    &$0$\\
$3/2$ & $3/2$   &$0$     &$0$    &$0$\\
$1/2$ & $5/6$   &$-1/3$ &$0$    &$0$\\
$1$ & $1/3$      &$1/6$  &$1/2$ &$0$\\
\hline
& $1/3$      &$1/6$  &$1/2$ &$0$\\
\end{tabular}
&&&
\begin{tabular}{c|cccc}
$1/2$    & $1/2$      &$0$     &$0$    &$0$\\
$5/4$ & $3/4$   &$1/2$     &$0$    &$0$\\
$1/4$ & $-1/4$   &$0$ &$1/2$    &$0$\\
$1$ & $1/6$      &$-1/6$  &$1/2$ &$1/2$\\
\hline
       & $1/6$      &$-1/6$  &$1/2$ &$1/2$\\
\end{tabular}
\end{array}
\end{displaymath}
\caption{GSA(3,4,2), \cite{bib:HPS}, Type A scheme and globally stiffly accurate, (GSA).} \label{tab: butcher gsa}
\end{table}
\begin{definition} [Type GSA \cite{BPR11}]\label{def:GSA}
 An IMEX method is {\bf globally stiffly accurate} ({\bf GSA}), if $\tilde{c}_s = c_s = 1$ and
\begin{align}
 \tilde{b}^T = \eee_s^T \tilde{A} \quad\mbox{ and }  \quad
 b^T = \eee_s^T A,
\end{align}
 If the previous equalities hold only for the implicit part, the method is {\bf implicit stiffly accurate}  ({\bf ISA}).
\end{definition}

To denote each IMEX scheme we use the following convention for the names of the schemes: Acronym($\sigma_E, \sigma_I, k$),where $\sigma_E$ denoting the effective number of stages of the explicit, $\sigma_I$  of the implicit scheme. and $k$ the combined order of accuracy.

\begin{table}[h!]
\begin{displaymath}
\begin{array}{llllllll}
\begin{tabular}{c|ccc}
$0$ 		& $0$	&$0$	&$0$\\
$1/2$ 	& $1/2$	&$0$	&$0$\\
$1$ 		& $1/2$	&$1/2$	&$0$\\
\hline
 		& $1/3$	&$1/3$	&$1/3$
\end{tabular}
&&&
\begin{tabular}{c|ccc}
$1/4$ 	& $1/4$	&$0$	&$0$\\
$1/4$ 	& $0$	&$1/4$	&$0$\\
$1$ 		& $1/3$	&$1/3$	&$1/3$\\
\hline
 		& $1/3$	&$1/3$	&$1/3$
\end{tabular}
\end{array}
\end{displaymath}
\caption{SSP2(3,3,2)  \cite{PR03}, Type A and implicit stiffly accurate scheme (ISA).}
\label{tab: butcher ssp2}
\end{table}

\section{Proof of Lemma \ref{lemm:OptMuTypeA}}\label{app:OptMu}
Let consider system \eqref{eq:semiPb}, we can decompose matrix $A$ in this way $A=D+L$, where $D=\operatorname{diag}(A)$ and $L$ is the lower triangular part of $A$,
  Therefore we can rewrite the second equation for $\JJ$ in this way
  \begin{align*}
  \epsi^2\JJ = & \epsi^2 j^n \eee - \Delta t D\left(\partial_{x}\RR+\JJ \right)- \Delta t L\left(\partial_{x}\RR+\JJ \right)\\
 \left(  \epsi^2Id+\Delta t D\right)\JJ = & \epsi^2 j^n \eee - \Delta t D\partial_{x}\RR- \Delta t L\left(\partial_{x}\RR+\JJ \right)
  \end{align*}
  Neglecting the $O(\epsi^2)$ term and inverting the diagonal matrix on the lefthand side we have
  \begin{align}\label{eq:ricorsivJ}
  \JJ = &- \underbrace{\Delta t  \left(  \epsi^2Id+\Delta t D\right)^{-1}D}_{\Mu}\partial_{x}\RR- \underbrace{\Delta t  \left(  \epsi^2Id+\Delta t D\right)^{-1}L}_{\mathcal{K}}\left(\partial_{x}\RR+\JJ \right)+o(\epsi^2).
  \end{align}
  Recursively we substitute in $J$  \eqref{eq:ricorsivJ}  itself, the first recursion gives
    \begin{align*}
  \JJ = &- \Mu\partial_{x}\RR+\mathcal{K}\left(Id-\Mu\right)\partial_{x}\RR+\mathcal{K}^2\left(\partial_{x}\RR+\JJ \right)+o(\epsi^2)
  \end{align*}
  applying the recursion $s-1$ times we obtain
      \begin{align*}
  \JJ = &- \Mu\partial_{x}\RR-\sum_{l=1}^{s-1}(-\mathcal{K})^l\left(Id-\Mu\right)\partial_{x}\RR-(-\mathcal{K})^s\left(\partial_{x}\RR+\JJ \right)+o(\epsi^2)=\\
        = &- \Mu\partial_{x}\RR+\left(\sum_{l=1}^{s-1}(-1)^{l-1}\mathcal{K}^l\right)\left(Id-\Mu\right)\partial_{x}\RR+o(\epsi^2),
  \end{align*}
   where in the last equation $\mathcal{K}^s$ vanishes since it is a nilpotent matrix of grade $s$, moreover each element of matrix $Id-\Mu$ has order $o(\epsi^2)$,  from a direct computation on the general $i$ element of the diagonal matrix we have
   \begin{align*}
   (Id-\Mu)_i=1-\frac{\Delta t a_{ii}}{\epsi^2+\Delta t a_{ii}}=1-\frac{\Delta t a_{ii}}{\epsi^2+\Delta t a_{ii}}=\frac{\epsi^2}{\epsi^2+\Delta t a_{ii}}
   \end{align*}
    Thus the expression for $\JJ$ reads
\begin{align*}
  \JJ = &- \Mu\partial_{x}\RR+o(\epsi^2),
  \end{align*}
which cancel exactly the explicit part of the semi--discretize scheme, in the $o(\epsi^2)$ regime.
Hence the appropriate choice for $\Mu$ is given by
\begin{align}\label{eq:optmu}
 \Mu &= \Delta t\left( \epsi^2 Id + \Delta t D \right)^{-1}D.
\end{align}

In table \ref{tab: optmu} we show $\Mu$ for different schemes using the provided method. Note that we use the same $\Mu$ for the adjoint equations.
\begin{table}[h!]
\centering
\resizebox{0.8\textwidth}{!}{
\begin{tabular}{|c|c|}
\hline
\qquad\:\:\:IMEX \qquad\:\:\:         &\qquad\:\:\:     $\Mu(\epsi)$\qquad\:\:\:\\
\hline
\qquad\:\:\:~     \qquad\:\:\:           & ~ \\
\qquad\:\:\:GSA(3,4,2)\qquad\:\:\: & \qquad\:\:\:   $\frac{\Delta t}{2\, {\epsi}^2 + \Delta t}\operatorname{diag}(1,1,1,1)$\qquad\:\:\: \\
\qquad\:\:\:~                & ~ \\
\hline
\qquad\:\:\:~                & ~ \\
\qquad\:\:\:SSP2(3,3,2) \qquad\:\:\:& \qquad\:\:\:   $
\operatorname{diag}\left(\frac{\Delta t}{4\, {\epsi}^2 + \Delta t}, \frac{\Delta t}{4\, {\epsi}^2 + \Delta t}, \frac{\Delta t}{ {3\epsi}^2 + \Delta t}\right)$\qquad\:\:\:\\
\qquad\:\:\:~                & ~ \\
\hline
\end{tabular}}
\caption{Optimal choice of matrix $\Mu$ for the different IMEX schemes used.}\label{tab: optmu}
\end{table}


{ \bf Acknowledgement}
This work has been supported by EXC128, DAAD 54365630, 55866082.



{\small
 
}
\end{document}